\numberwithin{equation}{section}
\newtheoremstyle{thm}{3pt}{3pt}{\itshape}{}{\bfseries}{.}{ }{}
\newtheoremstyle{rest}{3pt}{3pt}{}{}{\bfseries}{.}{ }{}
\newtheoremstyle{comment}{3pt}{3pt}{}{}{\itshape}{.}{ }{}
\theoremstyle{thm}
\newtheorem{theorem}{Theorem}
\newtheorem{lemma}{Lemma}
\newtheorem{proposition}{Proposition}
\newtheorem{assumption}{Assumption}
\theoremstyle{rest}
\newtheorem{example}{Example}
\theoremstyle{comment}
\newtheorem*{comments}{Remarks}
\newtheorem*{comment}{Remark}
\newcommand{\ev}{\mathbb{E}}%
\newcommand{\prob}{\mathbb{P}}%
\DeclareMathOperator{\empro}{\nu}%
\title{\normalsize Stochastic Equicontinuity in Nonlinear Time Series Models}
\author{\normalsize Andreas Hagemann\thanks{Department of Economics, University of Notre Dame, 434 Flanner Hall, Notre Dame, IN 46556, USA.
Tel.: +1 (574) 631-1688. Fax: +1 (574) 631-4783.
E-mail address: \href{mailto:andreas.hagemann@nd.edu}{\texttt{andreas.hagemann@nd.edu}}. I would like to thank Roger Koenker and an anonymous referee for helpful comments. I gratefully acknowledge financial support from a UIUC Summer Research Fellowship. All errors are my own.}\\
\normalsize\emph{University of Notre Dame}}
\date{\normalsize\today}
\begin{document}
\maketitle
\begin{abstract}\small
In this paper I provide simple and easily verifiable conditions under which a strong form of stochastic equicontinuity holds in a wide variety of modern time series models. In contrast to most results currently available in the literature, my methods avoid mixing conditions. I discuss several applications in detail.
\end{abstract}


\section{Introduction}
Stochastic equicontinuity typically captures the key difficulty in weak convergence proofs of estimators with non-differentiable objective functions. Precise and elegant methods have been found to deal with cases where the data dependence structure can be described by mixing conditions; see \citet{dedeckeretal2007} for an excellent summary. Mixing assumptions are convenient in this context because they measure how events generated by time series observations---rather than the observations themselves---relate to one another and therefore also measure dependence of functions of such time series. The downside to these assumptions is that they can be hard to verify for a given application. \citet{hansen1996} describes alternatives and considers parametric classes of functions that behave like mixingales, but his results come at the expense of Lipschitz continuity conditions on these functions and rule out many applications of interest.

In this paper I give simple and easily verifiable conditions under which objective functions of econometric estimators are stochastically equicontinuous when the underlying process is a stationary time series of the form
\begin{align}\label{eq:nonlin}
\xi_i = \xi(\varepsilon_{i},\varepsilon_{i-1},\varepsilon_{i-2},\dots).
\end{align}
Here $(\varepsilon_{i})_{\in\mathbb{Z}}$ is a sequence of iid copies of a random variable $\varepsilon$ and $\xi$ is a measurable, possibly unknown function that transforms the input $(\varepsilon_{i},\varepsilon_{i-1},\dots)$ into the output $\xi_i$. The stochastic equicontinuity problem does not have to be parametric and no continuity conditions are needed. The class \eqref{eq:nonlin} allows for the construction of dependence measures that are directly related to the stochastic process and includes a large number of commonly-used stationary time series models. 
The next section provides several specific examples.

In the following, $\Vert X \Vert_p$ denotes $(\ev |X|^p)^{1/p}$ and $\prob^*$ and $\ev^*$ are outer probability and expectation, respectively \citep[see][p.\ 258]{vandervaart1998}. Limits are as $n\to\infty$.

\section{Stochastic Equicontinuity in Nonlinear Time Series Models}\label{sec:equi}

Let $\empro_n f := n^{-1/2}\sum_{i=1}^n \bigl( f(\xi_i) - \ev f(\xi_0) \bigr)$ be the empirical process evaluated at some function $f$.  Here $f$ is a member of a class of real-valued functions $\mathcal{F}$. In econometric applications, $\mathcal{F}$ is typically a parametric class $\{ f_\theta : \theta \in\Theta \}$, where $\Theta$ is a bounded subset of $\mathbb{R}^k$, although no parametric restriction on $\mathcal{F}$ is necessary in the following. Define a norm by $\rho(f) = \Vert f(\xi_0) \Vert_2$. An empirical process is said to be \textit{stochastically equicontinuous} \citep[see, e.g.,][p.\ 139]{pollard1985} on $\mathcal{F}$ if for all $\epsilon > 0$ and $\eta>0$, there is a $\delta>0$ such that 
\begin{equation}
\limsup_{n\to\infty}\prob^* \biggl( \sup_{f,g\in\mathcal{F}:\rho(f-g) < \delta}| \empro_n( f - g)| > \eta \biggr) < \epsilon.\label{eq:stochequi}
\end{equation}

\addlines

As mentioned above, proving stochastic equicontinuity is often the key difficulty in weak convergence proofs. The next four examples illustrate typical applications.
\begin{example}[Quantilograms]\label{ex:quantilo}
\citet{lintonwhang2007} measure the directional predictive ability of stationary time series $(X_i)_{i\in\mathbb{Z}}$ with the quantilogram, a normalized version of $\ev(\alpha - 1\{X_{0} < \theta_\alpha \})(\alpha - 1\{X_{h} < \theta_\alpha \})$ with $\alpha\in(0,1)$ and $h=1,2,\dots$, where $\theta_\alpha$ is the $\alpha$-quantile of the marginal distribution of $(X_i)_{i\in\mathbb{Z}}$. Let $\xi_i = (X_{i-h},X_i)^\top$ and $f_{\theta}(\xi_i) = (\alpha - 1\{X_{i-h} < \theta \})(\alpha - 1\{X_{i} < \theta \})$. Under the null hypothesis of no directional predictability, we have $\ev f_{\theta_\alpha}(\xi_0) = 0$ for all $h=1,2,\dots$. Let $\hat{\theta}_{n,\alpha}$ be the sample $\alpha$-quantile and replace population moments by sample moments to obtain 
$(n-h)^{-1}\sum_{i=1+h}^{n} f_{\hat{\theta}_{n,\alpha}}(\xi_i)$, the sample version of $\ev f_{\theta_\alpha}(\xi_0)$. 
Apart from a scaling factor, the asymptotic null distribution of the sample quantilogram can be determined through the decomposition
\begin{align*}
(n-h)^{-1/2} \sum_{i=1+h}^{n} f_{\hat{\theta}_{n,\alpha}}(\xi_i) = \sqrt{n-h}\,\ev f_{\hat{\theta}_{n,\alpha}}(\xi_0)  + \empro_{n-h} f_{{\theta}_{\alpha}} + \empro_{n-h} (f_{\hat{\theta}_{n,\alpha}} - f_{{\theta}_{\alpha}}).
\end{align*}
If the distribution of $X_i$ is smooth, the delta method can be used to control the first term on the right and, under dependence conditions, an ordinary central limit theorem applies to the second term. Further, we have $\rho(f_{\hat{\theta}_{n,\alpha}} - f_{{\theta}_{\alpha}}) \to_p 0$ whenever $\hat{\theta}_{n,\alpha} \to_p \theta_{\alpha}$ (see Example \ref{ex:quantiloc} below). Hence, we can take $\mathcal{F}=\{ f_\theta : \theta \in\Theta \}$, where $\Theta$ is a compact neighborhood of $\theta_\alpha$, and as long as \eqref{eq:stochequi} holds, the third term on the right-hand side of the preceding display converges to zero in probability because in large samples
\begin{align*}
\prob\Bigl(\empro_{n-h} (f_{\hat{\theta}_{n,\alpha}} - f_{{\theta}_{\alpha}}) > \eta, \rho(f_{\hat{\theta}_{n,\alpha}} - f_{{\theta}_{\alpha}}) < \delta \Bigr) 
\leq \prob^* \biggl( \sup_{f_\theta\in\mathcal{F}:\rho(f_\theta-f_{\theta_\alpha}) < \delta}| \empro_{n-h}( f_\theta - f_{\theta_\alpha})| > \eta \biggr).
\end{align*}
\end{example}

\begin{example}[Robust $M$-estimators of location]\label{ex:huber}
Robust location estimators can often be defined implicitly as an $M$-estimator $\hat{\theta}_n$ that nearly solves $n^{-1}\sum_{i=1}^n f_{\theta}(\xi_i) = 0$ in the sense that $\sum_{i=1}^n f_{\hat{\theta}_n}(\xi_i) = o_p(\sqrt{n})$. Popular examples include the median with $f_{\theta}(x) = \mathrm{sign}(x-\theta)$ and Huber estimators with $f_{\theta}(x) = -\Delta 1\{ x-\theta < -\Delta\} + (x-\theta) 1\{ |x-\theta| \leq \Delta \} + \Delta 1\{ x-\theta > \Delta\}$ for some $\Delta > 0$. Add and subtract in $\sum_{i=1}^n f_{\hat{\theta}_n}(\xi_i) = o_p(\sqrt{n})$ to see that stochastic equicontinuity implies $\sqrt{n}\,\ev f_{\hat{\theta}_n}(\xi_0) + \empro_n f_{\theta_0} = o_p(1).$ The limiting behavior of $\sqrt{n}(\hat{\theta}_n - \theta_0)$ can then again be determined through the delta method and a central limit theorem.
\end{example}

\begin{example}[Stochastic dominance]\label{ex:dominance} When comparing two stationary time series $(X_{i,1})_{i\in\mathbb{Z}}$ and $(X_{i,2})_{i\in\mathbb{Z}}$, $X_{i,1}$ is said to (weakly) stochastically dominate $X_{i,2}$ over $\Theta$ if $\prob(X_{0,1} \leq \theta) \leq \prob(X_{0,2} \leq \theta)$ for all $\theta\in\Theta$. Let $\xi_i = (X_{i,1},X_{i,2})^\top$ and  $f_\theta({\xi_i}) = 1\{X_{i,1}\leq \theta\}-1\{X_{i,2}\leq \theta\}$. The null of stochastic dominance can be expressed as $\sup_{\theta\in\Theta}\ev f_{\theta}(\xi_0)\leq 0$. \citet{lintonetal2005} study weak convergence of the rescaled sample equivalent $\sup_{\theta\in\Theta}n^{-1/2}\sum_{i=1}^n f_{\theta}(\xi_i)$ to the supremum of a certain Gaussian process 
when the null hypothesis is satisfied with $\sup_{\theta\in\Theta}\ev f_{\theta}(\xi_0)= 0$. This convergence follows from the continuous mapping theorem as long as \begin{inparaenum}[(i)]\item\label{ex:dominance_i} $(\Theta,\rho)$ is a totally bounded pseudometric space, \item\label{ex:dominance_ii} $(\nu_n f_{\theta_1},\dots,\nu_n f_{\theta_k})^\top$ converges in distribution for every finite set of points $\theta_1,\dots,\theta_k$ in $\Theta$, and \item $\nu_n f_\theta$ is stochastically equicontinuous; see, e.g., \citet[p.\ 261]{vandervaart1998}. \end{inparaenum}  Condition \eqref{ex:dominance_i} can be shown to hold if $\Theta$ is bounded and $\xi_i$ has Lipschitz continuous marginal distribution functions. Condition \eqref{ex:dominance_ii} can be verified with the help of a multivariate central limit theorem.
\end{example}

\begin{example}[Censored quantile regression]\label{ex:cquantreg}
\citet{volgushevetal2012} develop Bahadur representations for quantile regression processes arising from a linear latent variable model with outcome $T_i$ and covariate vector $Z_i$. Denote the random censoring time by $C_i$. Only $Z_i$, the random event time $\min\{T_i, C_i\}$, and the associated censoring indicator $1\{T_i \leq C_i\}$ are observed. Let $\xi_i = (T_i, C_i, Z_i^\top)^\top$ and $f_\theta(\xi_i) = Z_i 1\{T_i \leq C_i\} 1\{T_i \leq Z_i^{\top}\theta \}$. As \citeauthor{volgushevetal2012}\ point out in their Remark 3.2, a key condition for the validity of their Bahadur representations under dependence is stochastic equicontinuity of $\nu_n f_\theta$.
\end{example}

\addlines

Stochastic equicontinuity cannot hold without restrictions on the complexity of the set $\mathcal{F}$; see, e.g., \citet[][pp.\ 2252--2253]{andrews1994}. Here, complexity of $\mathcal{F}$ is measured via its bracketing number $N = N(\delta,\mathcal{F})$, the smallest number for which there are functions $f_1,\dots,f_N\in\mathcal{F}$ and functions $b_1,\dots,b_N$ (not necessarily in $\mathcal{F}$) such that $\rho(b_k)\leq\delta$ and $|f-f_k|\leq b_k$ for all $1\leq k\leq N$. In addition, some restrictions are required on the memory of the time series. For processes of the form \eqref{eq:nonlin}, the memory is most easily controlled by comparing $\xi_i$ to a slightly perturbed version of itself \citep[see][]{wu2005b}. Let $(\varepsilon_i^*)_{i\in\mathbb{Z}}$ be an iid copy of $(\varepsilon_i)_{i\in\mathbb{Z}}$, so that the difference between $\xi_i$ and $\xi_i' := \xi(\varepsilon_i,\dots,\varepsilon_1,\varepsilon_0^*,\varepsilon_{-1}^*,\dots)$ are the inputs prior to period $1$. Assume the following:
\begin{assumption}\label{as:coupling}
Let $\mathcal{F}$ be a uniformly bounded class of real-valued functions with bracketing numbers $N(\delta,\mathcal{F}) < \infty$. Then there exists some $\alpha\in(0,1)$ and $p > 0$ such that
\begin{compactenum}[\upshape(i)]
\item\label{as:coupling1} $\sup_{f\in\mathcal{F}}\Vert f(\xi_n) - f(\xi_n') \Vert_p = O(\alpha^n)$ and 

\item\label{as:coupling2} $\max_{1\leq k \leq N(\delta,\mathcal{F})}\Vert b_k(\xi_n) - b_k(\xi_n') \Vert_p = O(\alpha^n)$ for any given $\delta >0$.
\end{compactenum}

\end{assumption}
\begin{comments}
\begin{inparaenum}[(i)] 
\item Assumption \ref{as:coupling} is a short-range dependence condition. Proposition \ref{prop:gmc} below presents a device to establish this condition and shows that Assumption \ref{as:coupling} often imposes only mild restrictions on the dependence structure. At the end of this section, I provide a detailed discussion of how to verify this assumption for Examples \ref{ex:quantilo}--\ref{ex:cquantreg}.

\item Because $\mathcal{F}$ is assumed to be uniformly bounded, the bounding functions $b_k$ can be chosen to be bounded as well. Hence, in view of Lemma 2 of \citet{wumin2005}, the exact choice of $p$ is irrelevant, for if Assumption \ref{as:coupling} holds for some $p$, then it holds for all $p > 0$.
\end{inparaenum}
\end{comments}

Assumption \ref{as:coupling} and a complexity requirement on $\mathcal{F}$ given by a bracketing integral imply a strong form of stochastic equicontinuity. The following theorem (the proof of which is found in the Appendix) is similar to \citeauthor{andrewspollard1994}'s (\citeyear{andrewspollard1994}) Theorem 2.2 with their mixing condition replaced by Assumption \ref{as:coupling}. It implies \eqref{eq:stochequi} via the Markov inequality.
\begin{theorem}\label{l:strongequi}
Suppose that Assumption \ref{as:coupling} holds and $\int_0^1 x^{-\gamma/(2+\gamma)}N(x,\mathcal{F})^{1/Q}\, dx < \infty$ for some $\gamma > 0$ and an even integer $Q\geq 2$. Then for every $\epsilon>0$, there is a $\delta>0$ such that 
$$\limsup_{n\to\infty}\ev^* \biggl( \sup_{f,g\in\mathcal{F}:\rho(f-g) < \delta}| \empro_n( f - g)|\biggr)^Q < \epsilon.$$
\end{theorem}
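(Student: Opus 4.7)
The plan is to mirror the chaining proof of \citeauthor{andrewspollard1994}'s Theorem~2.2 and substitute their strong-mixing-based moment inequality with an analogue derived directly from Assumption~\ref{as:coupling}. The argument splits into two components: (a) a moment inequality $\|\empro_n f\|_Q \leq K\rho(f)^{2/(2+\gamma)}$, uniform in $n$, for every uniformly bounded $f$ satisfying the coupling bound in Assumption~\ref{as:coupling}(\ref{as:coupling1}), with the exponent matched to the hypothesised bracketing integral; and (b) a bracketing chain that reduces the supremum in the theorem to a telescoping sum of $L^Q$-norms to which (a) applies at every dyadic scale.

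For (a), I would decompose $f(\xi_i)-\ev f(\xi_0)$ using the martingale projections $P_k=\ev[\cdot\mid\mathcal{H}_k]-\ev[\cdot\mid\mathcal{H}_{k-1}]$ with $\mathcal{H}_k=\sigma(\varepsilon_j:j\leq k)$, as in \citet{wu2005b}. Stationarity combined with the identity $\ev[f(\xi_n)\mid\mathcal{H}_0]-\ev f(\xi_0)=\ev[f(\xi_n)-f(\xi_n')\mid\mathcal{H}_0]$ converts Assumption~\ref{as:coupling}(\ref{as:coupling1}) via conditional Jensen into the geometric bound $\|P_k f(\xi_n)\|_p=O(\alpha^{n-k})$; the second remark following Assumption~\ref{as:coupling} then upgrades this to $L^Q$. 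A Burkholder-type argument yields a moment bound on $\sum_{i=1}^n(f(\xi_i)-\ev f(\xi_0))$ of the correct order $n^{Q/2}$. Finally I would interpolate this bound with the elementary estimate $\|\empro_n f\|_2\leq C\rho(f)$ (itself immediate from the geometric covariance decay that Assumption~\ref{as:coupling} implies) and use the uniform boundedness of $\mathcal{F}$ to trade supremum-norm factors for powers of $\rho(f)$; the exponent $2/(2+\gamma)$ appears at this interpolation step.

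For (b), let $\{b_j^{(k)}\}_{j=1}^{N_k}$ with centers $\{f_j^{(k)}\}_{j=1}^{N_k}$ realise $N_k=N(\delta/2^k,\mathcal{F})$ for each $k\geq 0$. For any $f,g$ with $\rho(f-g)<\delta$, telescope $\empro_n(f-g)$ through the nearest centers at every dyadic scale; the residual bracket terms obey the same moment bound by Assumption~\ref{as:coupling}(\ref{as:coupling2}). Taking $Q$-th powers, applying (a) term by term, and unifying over the $N_k$ centers at each scale through $\ev\max_{j\leq N}|X_j|^Q\leq N\max_j\ev|X_j|^Q$ produces
\[
\ev^*\Bigl(\sup_{\rho(f-g)<\delta}|\empro_n(f-g)|\Bigr)^Q\leq K\Bigl(\int_0^{2\delta} x^{-\gamma/(2+\gamma)}N(x,\mathcal{F})^{1/Q}\,dx\Bigr)^Q,
\]
which tends to zero as $\delta\downarrow 0$ by the integrability hypothesis.

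The main obstacle is the moment inequality in (a): recovering the $\rho(f)$-exponent $2/(2+\gamma)$, rather than a cruder supremum-norm factor, demands that the geometric coupling rate and the $L^2$-information carried by $\rho$ be deployed in tandem, which is precisely where the argument must depart from \citeauthor{andrewspollard1994}. Once this inequality is in place, step (b) is a mechanical adaptation of their bracketing chain.
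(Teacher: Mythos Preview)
Your overall architecture---an Andrews--Pollard moment inequality plus their bracketing chain---is exactly the paper's. Step~(b) matches: the paper simply defers to Andrews and Pollard's own chaining (their ``Proof of inequality (3.2),'' ``Proof of inequality (3.3),'' and ``Comparison of pairs'') once the moment lemma is in place. The divergence is in step~(a), and there your proposal has a real gap.

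The paper does not use martingale projections or Burkholder. Instead it proves a direct analogue of Andrews--Pollard's Lemma~3.1 (the paper's Lemma~A.1): writing $Z(i)=(f-g)(\xi_i)-\ev(f-g)(\xi_0)$ and $Z'(i)=(f-g)(\xi_i')-\ev(f-g)(\xi_0)$, one telescopes the product $Z(i_1)\cdots Z(i_k)$ by successively replacing $Z(i_{m+j})$ with $Z'(i_{m+j})$ for $j\geq 1$, where $d=i_{m+1}-i_m$ is the largest gap. The final product $Z(i_1-i_m)\cdots Z(0)\,Z'(d)\cdots Z'(i_k-i_m)$ factors by independence, and each telescoping error is bounded via H\"older with one factor $\|Z(j)-Z'(j)\|_s\leq M\alpha^j$ from Assumption~\ref{as:coupling}. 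Choosing $s=(Q+\gamma)/\gamma$ leaves $1/p+1/q=Q/(Q+\gamma)$ for the remaining factors, which contribute $\rho(f-g)^{(2+\gamma)(1/p+1/q)}\geq\tau(f-g)^2$. This replaces Andrews--Pollard's mixing-based inequality~(A.2), after which their combinatorics yield $\ev|\empro_n(f-g)|^Q\leq Cn^{-Q/2}\sum_{j=1}^{Q/2}(\tau(f-g)^2 n)^j$ unchanged.

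Your Burkholder-plus-interpolation route does not recover this. From $\|\empro_n f\|_{Q'}\leq C_{Q'}$ (Burkholder plus geometric coupling) and $\|\empro_n f\|_2\leq C\rho(f)$, log-convexity gives $\|\empro_n f\|_Q\leq C\rho(f)^\theta$ with $1/Q=\theta/2+(1-\theta)/Q'$; letting $Q'\to\infty$ yields at best $\theta=2/Q$, and hitting $\theta=2/(2+\gamma)$ forces $Q'=\gamma Q/(2+\gamma-Q)$, which is positive only when $\gamma>Q-2$. That couples $\gamma$ and $Q$, which is precisely what the Remark after the theorem says does \emph{not} happen---and the decoupling is the point, since one wants $\gamma$ small and $Q$ large to make the bracketing integral converge in the examples. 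Moreover, the chaining in Andrews--Pollard actually needs the full polynomial form of the moment bound (to produce $\|\max_k|\empro_n b_k|\|_Q\leq C N^{1/Q}\max\{n^{-1/2},\max_k\tau(b_k)\}$, where the $n^{-1/2}$ term handles the finest scales), not just a single-power estimate. To get the correct exponent without restriction you are driven back to bounding the mixed moments $\ev Z(i_1)\cdots Z(i_k)$ directly, which is the paper's route.
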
 

\begin{comment}
A useful feature of this theorem is that the constants $\gamma$ and $Q$ are not connected to the dependence measures as in \citet{andrewspollard1994}. In contrast to their result, $\gamma$ and $Q$ can therefore be chosen to be as small and large, respectively, as desired to make the bracketing integral converge without restricting the set of time series under consideration. 
\end{comment}

As a referee points out, Assumption \ref{as:coupling} is not primitive and therefore the application at hand determines how difficult it is to verify this assumption. Suitable sufficient conditions can be obtained, e.g., if $\Vert f(\xi_n) - f(\xi_n') \Vert_p$ and $\Vert b_k(\xi_n) - b_k(\xi_n') \Vert_p$ can be uniformly bounded above by constant multiples of $\Vert\xi_n - \xi_n' \Vert_q$ for some $q > 0$. Since the expectation operator is a smoothing operator, these bounds can hold even if $f$ and $b_k$ are not Lipschitz or, more generally, H\"older continuous. Assumption \ref{as:coupling} is then satisfied as long as the geometric moment contraction (GMC) property of \citet{wumin2005} holds, i.e., there is some $\beta \in(0,1)$ and $q>0$ such that $\Vert \xi_n - \xi_n' \Vert_q = O(\beta^n)$. Time series models with the GMC property include, among many others, stationary (causal) ARMA, ARCH, GARCH, ARMA-ARCH, ARMA-GARCH, asymmetric GARCH, generalized random coefficient autoregressive, and quantile autoregressive models; see \citet{shaowu2007} and \citet{shao2011} for proofs and more examples.

\addlines

The problem in Examples \ref{ex:quantilo}--\ref{ex:cquantreg} and in a variety of other applications is the appearance of one or more indicator functions that cause kinks or discontinuities in the objective function. The following result \citep[a generalization of Proposition 3.1 of][]{hagemann2011} combines the GMC property and smoothness conditions on the distribution of the underlying stochastic process to generate the kinds of bounds needed for Assumption \ref{as:coupling} when indicator functions are present. I discuss the result in the examples below. Here and in the remainder of the paper, if $\xi_i$ is vector-valued and has a subvector $X_i$, then the corresponding subvector of the perturbed version $\xi_i'$ is denoted by $X_i'$.

\begin{proposition}\label{prop:gmc}
Suppose $(\xi_i)_{i\in\mathbb{Z}} = (U_i,V_i^\top,W_i^\top)^\top_{i\in\mathbb{Z}}$ has the form \eqref{eq:nonlin} and $\xi$ takes values in $\mathcal{U}\times \mathcal{V}\times \mathcal{W}\subseteq \mathbb{R}\times\mathbb{R}^l\times\mathbb{R}^m$. Assume that uniformly in $w\in\mathcal{W}$, $\prob(U_i \leq x\mid W_i = w)$ is Lipschitz in $x$ on an open interval containing $\mathcal{X}=\{v^\top\lambda \mid v\in\mathcal{V},\lambda\in\Lambda\subseteq \mathbb{R}^l\}$. Suppose $(U_i,V_i^\top)^\top_{i\in\mathbb{Z}}$ has the GMC property. If either \begin{inparaenum}[\upshape(i)] \item\label{prop:gmc_single} $\mathcal{V}$ is a singleton, or \item\label{prop:gmc_func} there is a measurable function $g\colon \mathcal{W}\to\mathcal{V}$ with $g(W_i) = V_i$ and $\Lambda$ is bounded, or \item\label{prop:gmc_indep} $U_i$ and $V_i$ are independent conditional on $W_i$ and $\Lambda$ is bounded, then    $\sup_{\lambda\in\Lambda}\Vert 1\{ U_n < V_n^{\top}\lambda \} - 1\{ U_n' < V_n'^{\top}\lambda \}\Vert_p = O(\alpha^n)$ and $\sup_{\lambda\in\Lambda}\Vert 1\{ U_n \leq V_n^{\top}\lambda \} - 1\{ U_n' \leq V_n'^{\top}\lambda \}\Vert_p = O(\alpha^n)$ for some $\alpha\in(0,1)$ and all $p > 0$.\end{inparaenum}
\end{proposition}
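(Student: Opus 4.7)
The plan is to reduce the claim to a probability bound. Set $\Delta_n := 1\{U_n < V_n^\top\lambda\} - 1\{U_n' < V_n'^{\top}\lambda\}$. Because $\Delta_n$ is $\{-1,0,1\}$-valued, $\Vert\Delta_n\Vert_p^p = \prob(\Delta_n\neq 0)$, so once I have $\prob(\Delta_n\neq 0) = O(\gamma^n)$ for some $\gamma\in(0,1)$, uniformly in $\lambda\in\Lambda$, the claim for every $p>0$ follows with $\alpha = \gamma^{1/p}$; the weak version follows from the same argument, so I focus on the strict one. The core is a standard slicing device: setting $Y_n = U_n - V_n^\top\lambda$ and $Y_n' = U_n' - V_n'^{\top}\lambda$, whenever $Y_n$ and $Y_n'$ straddle zero we have $|Y_n|\leq |Y_n - Y_n'|$, so for every $\delta>0$
\[
\{\Delta_n\neq 0\}\subseteq \{|Y_n|\leq\delta\}\cup\{|Y_n - Y_n'|>\delta\}.
\]

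For $\{|Y_n|\leq\delta\}$ I would exploit the Lipschitz assumption on $\prob(U_i\leq x\mid W_i = w)$ case by case. In (i), $V_n = V_n'$ is the sole element of $\mathcal{V}$, hence $V_n^\top\lambda$ is constant and integrating the uniform conditional Lipschitz bound over the law of $W_n$ gives $\prob(|Y_n|\leq\delta)\leq 2C\delta$. In (ii), $V_n^\top\lambda$ is $\sigma(W_n)$-measurable through $g$, so the conditional Lipschitz bound applies directly. In (iii), the conditional law of $U_n$ given $(W_n,V_n)$ coincides with that given $W_n$ alone by conditional independence, yielding the same $2C\delta$ estimate. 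Since $\mathcal{X}$ lies in the open Lipschitz interval, these bounds are valid uniformly in $\lambda\in\Lambda$ for all sufficiently small $\delta$.

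For $\{|Y_n - Y_n'|>\delta\}$ Markov's inequality at the GMC exponent $q$ and the triangle inequality give
\[
\prob(|Y_n - Y_n'|>\delta)\leq \delta^{-q}\Bigl(\Vert U_n - U_n'\Vert_q + \sup_{\lambda\in\Lambda}\Vert\lambda\Vert\,\Vert V_n - V_n'\Vert_q\Bigr)^q = O(\delta^{-q}\beta^{nq}),
\]
where the supremum is finite in (ii) and (iii) by boundedness of $\Lambda$, and vacuous in (i) since $V_n=V_n'$; the GMC property of $(U_i,V_i^\top)^\top$ supplies the $O(\beta^n)$ rates for the two component norms. Balancing the two bounds by choosing $\delta = \beta^{nq/(q+1)}$ yields $\prob(\Delta_n\neq 0) = O(\beta^{nq/(q+1)})$ uniformly in $\lambda$, which closes the argument. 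The most delicate step will be case (iii), where I must separate the roles of $W_n$ and $V_n$ carefully when invoking the conditional Lipschitz bound via iterated conditional expectations; cases (i) and (ii) are essentially immediate once the slicing decomposition is in place.
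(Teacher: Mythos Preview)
Your proposal is correct and follows essentially the same route as the paper: both arguments reduce to a probability bound, slice at a threshold, use the conditional Lipschitz assumption for the small-increment piece and Markov plus the GMC property for the large-increment piece, and balance with the same choice $\delta=\beta^{nq/(q+1)}$. The only cosmetic difference is that the paper first adds and subtracts $1\{U_n' < V_n^\top\lambda\}$ to split the perturbation into separate $U$- and $V$-parts before applying the threshold, whereas you treat $Y_n-Y_n'$ as a single quantity; the content is the same.
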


Before concluding this section, the next four examples illustrate how to apply \mbox{Theorem \ref{l:strongequi}} and how to verify Assumption \ref{as:coupling} in practice. 

\begin{example}[Quantilograms, continued]\label{ex:quantiloc}
Suppose $F_X(\theta) := \prob(X_0 \leq \theta)$ is Lipschitz on an open interval containing $\Theta$. Take a grid of points $\min \Theta := t_0 < t_1 < \cdots < t_N =: \max\Theta$ and let $b_k(\xi_i) = 1\{ X_{i-h} < t_k \} - 1\{ X_{i-h} < t_{k-1} \} + 1\{ X_{i} < t_k \} - 1\{ X_{i} < t_{k-1} \}$. Given a $\theta \in \Theta$, we can then find an index $k$ such that $|f_{\theta} - f_{t_{k}}|\leq b_k$, where I used the fact that $|\alpha - 1\{\cdot \}| \leq \max\{\alpha, 1-\alpha\} < 1$. Moreover, by stationarity
$\rho( b_k) \leq 2 \Vert 1\{X_0 < t_{k}\} -  1\{X_0 < t_{k-1}\} \Vert_2 \leq 2 \sqrt{ F_X(t_k) - F_X(t_{k-1}) },$
which is bounded above by a constant multiple of $\sqrt{t_k - t_{k-1}}$ due to Lipschitz continuity. Hence,  if $\rho( b_k) \leq \delta$ for all $1\leq k\leq N$, we can choose bracketing numbers with respect to $\rho$ of order $N(\delta,\mathcal{F}) = O(\delta^{-2})$ as $\delta\to 0$ \citep[see][pp.\ 270--272]{andrewspollard1994,vandervaart1998} and the bracketing integral converges, e.g., for $\gamma = 1$ and $Q=4$. By the same calculations as for $\rho( b_k)$, all $\theta,\theta'\in \Theta$ satisfy $\rho(f_\theta-f_{\theta'}) =O( |\theta-\theta'|^{1/2})$ as $\theta\to\theta'$ and therefore $\rho(f_{\hat{\theta}_{n,\alpha}} - f_{{\theta}_{\alpha}}) \to_p 0$ if $\hat{\theta}_{n,\alpha} \to_p \theta_{\alpha}$. 
In addition, suppose the GMC property holds. Because
$\Vert f_\theta(\xi_n) - f_\theta(\xi_n') \Vert_p \leq 2^{1/p}\Vert 1\{X_n < \theta\} -  1\{X'_n < \theta\} \Vert_p + 2^{1/p}\Vert 1\{X_{n-h} < \theta\} -  1\{X'_{n-h} < \theta\} \Vert_p$ uniformly in $\theta$ by Lo\`eve's $c_r$ inequality, apply Proposition \ref{prop:gmc}\eqref{prop:gmc_single} twice with $U_i=X_i$ and $U_i=X_{i-h}$ to see that Assumption \ref{as:coupling}\eqref{as:coupling1} is also satisfied. In both cases we can take $\Lambda = \Theta$, $V_i\equiv 1$, and $W_i\equiv 0$ (say). The same reasoning applies to $b_k$.
\end{example}

\addlines

\begin{example}[Robust $M$-estimators of location, continued]
Nearly identical arguments as in the preceding example yield stochastic equicontinuity for the median. For the Huber estimator, take the grid from before and note that we can find a $k$ such that $|f_{\theta} - f_{t_k}| \leq \min\{t_k-t_{k-1},2\Delta\} =: b_k$. A routine argument \citep[][Example 19.7, pp.\ 270--271]{andrewspollard1994,vandervaart1998} yields bracketing numbers of order $N(\delta,\mathcal{F}) = O(\delta^{-1})$ as $\delta\to 0$; the bracketing integral is finite, e.g., for $\gamma = 1$ and $Q=2$. Assumption \ref{as:coupling}\eqref{as:coupling1} can be verified via the bound $\sup_{\theta\in\Theta}\Vert f_\theta(\xi_n) - f_\theta(\xi_n') \Vert_p \leq \Vert \xi_n - \xi_n' \Vert_p$ and \eqref{as:coupling2} holds trivially.
\end{example}

\begin{example}[Stochastic dominance, continued]\label{ex:dominancec} 
Replace $(X_{i-h}, X_i)$ in Example \ref{ex:quantiloc} with $(X_{i,1},X_{i,2})$. A slight modification of the arguments presented there to account for the fact that the distribution functions of $X_{i,1}$ and $X_{i,2}$ are not identical yields stochastic equicontinuity of $\nu_n f_\theta$. \citet{lintonetal2005} also consider a more general case with $X_{i,j} = Y_{i,j}-Z_{i,j}^\top\eta_{0,j}$ for $j=1,2$. Here $(X_{i,1},X_{i,2})$ is unobserved and $\eta_0 = (\eta_{0,1}^\top,\eta_{0,2}^\top)^\top$ has to be estimated by some $\hat{\eta}_n = (\hat{\eta}_{n,1}^\top,\hat{\eta}_{n,2}^\top)^\top$. Let $\eta = (\eta_1^\top,\eta_2^\top)^\top$ and $f_{\theta,\eta}(\xi_i) = 1\{Y_{i,1}\leq Z_{i,1}^\top\eta_{1} + \theta\}-1\{Y_{i,2}\leq Z_{i,2}^\top\eta_{2} + \theta\}$. If $\sup_{\theta\in\Theta}\ev f_{\theta,\eta_0}=0$, the asymptotic null distribution of the test statistic $\sup_{\theta\in\Theta}n^{-1/2}\sum_{i=1}^n f_{\theta,\hat{\eta}_n}(\xi_i)$ can be derived from an application of the continuous mapping theorem in
\begin{align*}
n^{-1/2}\sum_{i=1}^n \bigl( f_{\theta,\hat{\eta}_n}(\xi_i) - \ev f_{\theta,\eta_0}(\xi_0)\bigr) = \sqrt{n}\ev (f_{\theta,\hat{\eta}_n}-f_{\theta,\eta_0})(\xi_0) + \nu_n f_{\theta,\eta_0} + \nu_n (f_{\theta,\hat{\eta}_n}-f_{\theta,\eta_0}).
\end{align*}
Under appropriate conditions on $\hat{\eta}_n$, the behavior of the first term on the right follows from the functional delta method; see Lemma 3 and 4 of \citet{lintonetal2005}. The asymptotic properties of the second term are the same as those of $\nu_n f_\theta$ from the simple case where $(X_{i,1},X_{i,2})$ is observed directly. 

\addlines

Verifying stochastic equicontinuity of $\nu_n f_{\theta,\eta}$ to control the third term is more involved. Let $\mathrm{H}_1\times\mathrm{H}_2$ be a bounded neighborhood of $(\eta_{0,1}^\top,\eta_{0,2}^\top)^\top$ and assume the conditional distribution functions of $X_{i,j}$ given $Z_{i,j}$, $j=1,2$, are Lipschitz continuous. The bounded set $\Theta\times\mathrm{H}_1\times\mathrm{H}_2$ can then be covered by balls with suitably chosen radius $r = r(\delta)$ and centers $(t_k,e_{k,1}^\top,e_{k,2}^\top)^\top$, $1\leq k \leq N$, so that the bracketing integral in Theorem \ref{l:strongequi} converges \citep[][Lemma 1]{lintonetal2005}. Here we take $\xi_i = (X_{i,1}, Z_{i,1}^\top,X_{i,2}, Z_{i,2}^\top)^\top$ and
\begin{align*}
b_k(\xi_i) &= 1\{ X_{i,1} < Z_{i,1}^\top(e_{k,1} - \eta_{0,1}) + t_k + (|Z_{i,1}|+1)r \}\\ &\qquad- 1\{ X_{i,1} \leq Z_{i,1}^\top(e_{k,1} - \eta_{0,1}) + t_k - (|Z_{i,1}|+1)r \}\\
&\qquad +1\{ X_{i,2} < Z_{i,2}^\top(e_{k,2} - \eta_{0,2}) + t_k + (|Z_{i,2}|+1)r \}\\ &\qquad- 1\{ X_{i,2} \leq Z_{i,2}^\top(e_{k,2} - \eta_{0,2}) + t_k - (|Z_{i,2}|+1)r \},
\end{align*}
where $|\cdot|$ is Euclidean norm. For fixed $k$ and $r$, define a function $g$ such that the first term on the right-hand side of the preceding display equals $1\{X_{i,1} < g(Z_{i,1})\}$. Assume $\xi_i$ has the GMC property. By the reverse triangle and Cauchy-Schwartz inequalities, $g(Z_i)$ has this property as well. Apply Proposition \ref{prop:gmc}\eqref{prop:gmc_func} with $U_i = X_{i,1}$, $V_i = g(Z_{i,1})$, $W_i = Z_{i,1}$, and $\Lambda = \{1\}$ to establish $\Vert 1\{X_{n,1} < g(Z_{n,1})\} - 1\{X_{n,1}' < g(Z_{n,1}')\} \Vert_p=O(\alpha^n)$. The same holds for the remaining indicator functions with appropriate choices of $g$. Conclude $\Vert b_k(\xi_n) - b_k(\xi_n') \Vert_p = O(\alpha^n)$ for \mbox{all $1\leq k\leq N$,} which verifies Assumption \ref{as:coupling}\eqref{as:coupling2}. Further, write $1\{Y_{i,j} \leq Z_{i,j}^\top\eta_{j} + \theta\} = 1\{X_{i,j} \leq Z_{i,j}^\top(\eta_{j}-\eta_{0,j}) + \theta\}$ so that for all $p>0$ we can bound $\sup_{\theta,\eta}\Vert f_{\theta,\eta}(\xi_n)-f_{\theta,\eta}(\xi_n')\Vert_p$, $\theta\in\Theta$ and $\eta\in\mathrm{H}_1\times\mathrm{H}_2$, by a constant multiple of
\begin{align*}
&\sup_{\theta,\eta_1}\Vert 1\{X_{n,1}\leq Z_{i,1}^\top(\eta_{1}-\eta_{0,1}) + \theta\}-1\{X_{i,1}'\leq Z_{n,1}'^\top(\eta_{1}-\eta_{0,1})+ \theta\}\Vert_p\\
&\qquad+\sup_{\theta,\eta_2}\Vert 1\{X_{n,2}\leq Z_{n,2}^\top(\eta_{2}-\eta_{0,2}) + \theta\}-1\{X_{n,2}'\leq Z_{n,2}'^\top(\eta_{2}-\eta_{0,2}) + \theta\}\Vert_p.
\end{align*}
View the two suprema in the display as suprema over $\lambda = (\theta,\lambda_j^\top)^\top\in\Lambda= \Theta\times \{\eta_j - \eta_{0,j}: \eta_j\in\mathrm{H}_j\}$ for $j=1,2$ and apply Proposition \ref{prop:gmc}\eqref{prop:gmc_func} twice with $U_{i} = X_{i,j}$, $V_i = (1,Z_{i,j}^\top)^\top$, $W_i = Z_{i,j}$ to see that Assumption \ref{as:coupling}\eqref{as:coupling1} holds as well.
\end{example}

\begin{example}[Censored quantile regression, continued] Following \citet[condition C1]{volgushevetal2012}, assume there is some fixed $\Delta > 0$ with $|Z_i|\leq \Delta$. Add and subtract to see that $\sup_{\theta\in\Theta}\Vert f_\theta(\xi_n) - f_\theta(\xi_n')\Vert_p$ does not exceed a constant multiple of
\begin{align*}
\Vert Z_n - Z_n'\Vert_p + \Delta \sup_{\theta\in\Theta}\Vert 1\{T_n < Z_n^\top\theta\} -1\{T_n' < Z_n'^\top\theta\}\Vert_p + \Delta \Vert 1\{T_n < C_n\} -1\{T_n' < C_n'\}\Vert_p.
\end{align*}
If $\xi_i = (T_i,C_i,Z_i^\top)^\top$ has the GMC property, then the first term is of size $O(\alpha^n)$ by \citeauthor{wumin2005}'s (\citeyear{wumin2005}) Lemma 2. Proposition \ref{prop:gmc}\eqref{prop:gmc_func} can be used to establish the same order of magnitude for the second term as long as the conditional distribution of $T_i$ given $Z_i$ satisfies the smoothness condition stated in the proposition. In addition, \citeauthor{volgushevetal2012}\ assume that $T_i$ and $C_i$ are independent conditional on $Z_i$. The size of the third term is then also $O(\alpha^n)$ by Proposition \ref{prop:gmc}\eqref{prop:gmc_indep}. This verifies Assumption \ref{as:coupling}\eqref{as:coupling1}. An argument similar to the one provided in Example \ref{ex:dominancec} establishes Assumption \ref{as:coupling}\eqref{as:coupling2}.

\end{example}

\phantomsection
\addcontentsline{toc}{section}{References}
\addlines[2]
{ 
\bibliography{qspec}}

\appendix
\section*{Appendix}

\section{Proofs}

\begin{proof}[Proof of Theorem \ref{l:strongequi}]
This follows from a simple modification of \citeauthor{andrewspollard1994}'s (\citeyear{andrewspollard1994}) proof of their Theorem 2.1. The proof requires three steps: \begin{inparaenum}[(i)] \item Their ``Proof of inequality (3.2),'' \item their ``Proof of inequality (3.3),'' and \item their ``Comparison of pairs'' argument. \end{inparaenum} Replace their $i$ with $k$ and their $\tau(h_i)$ with $\tau(b_k)$; then apply Lemma \ref{l:moments} below instead of \citeauthor{andrewspollard1994}'s (\citeyear{andrewspollard1994}) Lemma 3.1 in the derivation of their inequality (3.5) to deduce
$\Vert \max_{1\leq k\leq N} |\empro_n b_k| \Vert_Q \leq C' N^{1/Q} \max\{n^{-1/2}, \max_{1\leq k\leq N} \tau(b_k)\}$
and use this in (i) instead of their inequality (3.5). Another application of the lemma establishes the required analogue of their inequality (3.5) used in (ii). The same inequality can also be applied in (iii). The other arguments remain valid without changes. 
\end{proof}

\begin{lemma}\label{l:moments}
Let $\tau(f) := \rho(f)^{2/(2+\gamma)}$ for some $\gamma > 0$ and suppose that Assumption \ref{as:coupling} holds. For all $n\in\mathbb{N}$, all $f,g\in\mathcal{F}$, and every even integer $Q\geq 2$ we have
\begin{align*}
\ev| \empro_n( f-g)|^Q \leq n^{-Q/2} C\Bigl(\bigl(\tau(f - g)^2 n\bigr) + \dots + \bigl(\tau(f - g)^2 n\bigr)^{Q/2}\Bigr),
\end{align*}
where $C$ depends only on $Q$, $\gamma$, and $\alpha$. The inequality remains valid when $f-g$ is replaced by $b_k$ for any given $k \geq 1$.
\end{lemma}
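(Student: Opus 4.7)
The plan is to mirror Andrews and Pollard's (1994) proof of their Lemma 3.1, replacing their mixing-based moment and covariance inequalities with analogues derived from the coupling in Assumption \ref{as:coupling}. Throughout, let $h$ denote either $(f - g) - \ev(f - g)(\xi_0)$ or $b_k - \ev b_k(\xi_0)$; both are uniformly bounded, have mean zero under the stationary law of $\xi_0$, and by Lo\`eve's $c_r$ inequality applied to Assumption \ref{as:coupling} they satisfy $\Vert h(\xi_n) - h(\xi_n')\Vert_p = O(\alpha^n)$ for every $p > 0$ (invoking the remark after Assumption \ref{as:coupling}). Stationarity provides, for any $i < j$, a shift-coupled variable $\xi_j''$ obtained by replacing the noise inputs at times $\leq i$ with independent copies, so that $\xi_j''$ is independent of $\sigma(\varepsilon_l : l \leq i)$ and $\Vert h(\xi_j) - h(\xi_j'')\Vert_p = O(\alpha^{j-i})$.

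The workhorse is a covariance-type inequality: for $i < j$,
\begin{equation*}
|\ev[h(\xi_i) h(\xi_j)]| = |\ev[h(\xi_i)(h(\xi_j) - h(\xi_j''))]| \leq \Vert h\Vert_{2+\gamma}\, \Vert h(\xi_j) - h(\xi_j'')\Vert_{(2+\gamma)/(1+\gamma)},
\end{equation*}
where the equality uses independence of $\xi_j''$ from $\xi_i$ together with $\ev h(\xi_j'') = 0$, and the inequality is H\"older. Uniform boundedness of $\mathcal{F}$ yields $\Vert h\Vert_{2+\gamma}^{2+\gamma} \leq (2\sup_{f \in \mathcal{F}}\Vert f\Vert_\infty)^\gamma \rho(h)^2$, so $\Vert h\Vert_{2+\gamma} \leq C\,\tau(h)$, and interpolating the uniform bound with the $L^p$ coupling rate gives $\Vert h(\xi_j) - h(\xi_j'')\Vert_{(2+\gamma)/(1+\gamma)} = O(\alpha^{(j-i)(1+\gamma)/(2+\gamma)})$. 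Combining with the trivial Cauchy-Schwarz bound $|\ev[h(\xi_i)h(\xi_j)]| \leq \rho(h)^2$ and splitting the sum over pairs at the threshold where the two bounds balance produces $\sum_{i,j=1}^n |\ev[h(\xi_i)h(\xi_j)]| \leq C\,n\,\tau(h)^2$, which settles $Q = 2$.

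For general even $Q$, I would expand $\ev(\sum_i h(\xi_i))^Q$ as a sum over ordered $Q$-tuples of indices and, following Andrews and Pollard, partition each tuple into $r$ consecutive blocks separated by gaps. The coupling substitution is applied at every between-block gap: the trailing factors are replaced by their shift-coupled versions, which are independent of the preceding block, so the joint expectation factorises; the substitution error is controlled by iterating the H\"older-plus-interpolation estimate above and yields a geometric decay factor in the gap. Centering of $h$ forces every block to contain at least two indices (singletons vanish after the final factorisation), so $r$ ranges from $1$ to $Q/2$, and a block of size $s \geq 2$ contributes at most $C\,\rho(h)^2 \leq C\,\tau(h)^2$ by generalised H\"older combined with uniform boundedness. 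Summing the geometric factors over the gaps converts each block-start index into a factor $n$, giving a contribution of order $n^r \tau(h)^{2r}$ from $r$-block tuples; adding over $r = 1,\dots,Q/2$ and dividing by $n^{Q/2}$ yields the stated bound.

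The main obstacle will be the combinatorial bookkeeping in the multi-block step: tracking how the H\"older exponent $2+\gamma$ and the coupling decay rate combine across $r$ blocks without generating spurious factors of $n$, and checking that the threshold-splitting argument used for $Q = 2$ extends cleanly to each gap. The specific exponent $2/(2+\gamma)$ in the definition of $\tau$ is precisely what is needed: it balances the $L^{2+\gamma}$ factor from H\"older against the $L^\infty$-to-$L^1$ interpolation of the coupling error, and the availability of the coupling rate in every $L^p$ (per the second remark following Assumption \ref{as:coupling}) is what lets us choose the H\"older exponent freely.
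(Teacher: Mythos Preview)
Your proposal is essentially correct and shares the paper's core idea---replace Andrews and Pollard's mixing-based covariance inequality by a telescoping coupling argument derived from Assumption~\ref{as:coupling}---but the paper's execution is tighter in one respect worth noting. Rather than carrying out the block combinatorics you sketch, the paper isolates the single-cut inequality that is the exact analogue of Andrews--Pollard's (A.2): for $i_1\le\cdots\le i_k$ with $d=i_{m+1}-i_m$,
\[
\bigl|\ev Z(i_1)\cdots Z(i_k)\bigr|\le \bigl|\ev Z(i_1)\cdots Z(i_m)\bigr|\,\bigl|\ev Z(i_{m+1})\cdots Z(i_k)\bigr| + M'\alpha^d\,\tau(f-g)^2,
\]
and then hands the rest of the argument to Andrews--Pollard verbatim. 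The $\tau^2$ in the error term is obtained by shifting indices so that $i_m\to 0$, telescoping the right-hand factors $Z(d),Z(i_{m+2}-i_m),\dots$ to their coupled versions $Z'(\cdot)$, and applying a \emph{three-way} H\"older split $\Vert Z(i_1)\cdots Z(i_m)\Vert_p\,\Vert Z-Z'\Vert_s\,\Vert Z(i_{m+2})\cdots Z(i_k)\Vert_q$ with $s=(\gamma+Q)/\gamma$ and $mp=(k-m-1)q=(k-1)/(1-1/s)$, so that the two outer norms together contribute $\tau^{Q(2+\gamma)/(Q+\gamma)}\ge\tau^2$.

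Your two-way H\"older for $Q=2$ extracts only one factor of $\tau$ from the error and then needs the separate Cauchy--Schwarz bound plus a threshold-splitting to recover $\tau^2$ after summation. That works for $Q=2$, but in your general-$Q$ sketch it is precisely the point you flag as the obstacle: the substitution errors (not only the fully factored main term) must themselves carry $\tau^2$, or else summing $O(\alpha^d)$ over all $Q$-tuples produces spurious powers of $n$. The paper's three-exponent H\"older resolves this directly at the level of a single cut, which is why the remaining combinatorics can be lifted unchanged from Andrews--Pollard.
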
 

\addlines[2]

\begin{proof}[Proof of Lemma \ref{l:moments}] 
Let $Z(i) := f(\xi_i) - \ev f(\xi_0)  - (g(\xi_i)- \ev g(\xi_0))$. Assume without loss of generality that $|Z(i)|\leq 1$ for all $i\geq 1$; otherwise rescale and redefine $C$. Define $Z'(i) = f(\xi_i') - \ev f(\xi_0)  - (g(\xi_i')- \ev g(\xi_0))$ and note that $\ev Z(i) = \ev Z'(i) = 0$ for all $i\in\mathbb{Z}$ and all $f,g\in\mathcal{F}$ because $\xi_i$ and $\xi_i'$ are identically distributed. For fixed $k\geq 2$, $d \geq 1$, and $1\leq m < k$, consider integers $i_1\leq \dots \leq i_m \leq i_{m+1} \leq \dots \leq i_k$ so that $i_{m+1}-i_m = d$. Since $Z(i)$ and $Z'(i)$ are stationary, repeatedly add and subtract to see that 
\begin{align}
\Bigl|\ev Z&({i_1})Z({i_2})\cdots Z({i_{k}}) - \ev Z({i_1})Z({i_2})\cdots Z({i_{m}}) \ev Z({i_{m+1}})\cdots Z({i_{k}})\Bigr|\nonumber\\
&=\Bigl|\ev Z({i_1-i_m})Z({i_2-i_m})\cdots Z({i_{k}-i_m})\nonumber\\ 
&\qquad - \ev Z({i_1-i_m}) Z({i_2-i_m})\cdots Z({0}) \ev Z({d})\cdots Z({i_{k}-i_m})\Bigr|\nonumber\\
&\leq \Bigl|\ev Z({i_1-i_m})\cdots  Z({0})\bigl(Z({d})-Z'({d})\bigr)Z({i_{m+2}-i_m})\cdots Z({i_{k}-i_m})\Bigr|\nonumber\\
&\qquad + \sum_{j=2}^{k-m-1} \Bigl|\ev Z({i_1-i_m})\cdots Z({0}) Z'({d})\times\cdots\nonumber\\
&\qquad\qquad\times \bigl(Z({i_{m+j}-i_m})-Z'({i_{m+j}-i_m})\bigr)\cdots Z({i_{k}-i_m})\Bigr|\nonumber\\
&\qquad +  \Bigl|\ev Z({i_1-i_m})\cdots Z({0}) Z'({d})\cdots Z'({i_{k}-i_m})\nonumber\\  
&\qquad\qquad- \ev Z({i_1-i_m})\cdots Z({0}) \ev Z({d})\cdots Z({i_{k}-i_m})\Bigr| \label{eq:gmcbound}
\end{align}
In particular, the last summand on the right-hand side is zero because $Z({i_1-i_m})\cdots Z({0})$ and $Z'({d})\cdots Z'({i_{k}-i_m})$ are independent and $Z({d})\cdots Z({i_{k}-i_m})$ and $Z'({d})\cdots Z'({i_{k}-i_m})$ are identically distributed. For a large enough $M>0$ and some $s > 1$, Assumption \ref{as:coupling}\eqref{as:coupling1} and distributional equivalence of $Z({d})$ and $Z'({d})$ imply $\Vert Z({d})-Z'({d})\Vert_s \leq \Vert f(\xi_d) - f(\xi'_d)\Vert_s + \Vert g(\xi_d) - g(\xi'_d) \Vert_s \leq 2 \sup_{f\in\mathcal{F}} \Vert f(\xi_d) - f(\xi'_d)\Vert_s \leq M \alpha^d$. H\"older's inequality then bounds the first term on the right-hand side of the preceding display by
\begin{align}\label{eq:holderbound}
\Vert Z({i_1})\cdots Z({i_{m}}) \Vert_p \Vert Z({i_{m+2}})\cdots Z({i_{k}}) \Vert_q M \alpha^d,
\end{align}
where the reciprocals of $p$, $q$, and $s$ sum to $1$. Proceeding similarly to \citet{andrewspollard1994}, another application of the H\"older inequality yields 
$\Vert Z({i_1})\cdots Z({i_{m}}) \Vert_p \leq \bigl(\prod_{j=1}^m \ev |Z({i_j})|^{mp} \bigr)^{1/(mp)} \leq \tau(f-g)^{(2+\gamma)/p}$
whenever $mp\geq 2$ and similarly $\Vert Z({i_{m+2}})\times\cdots\times Z({i_{k}}) \Vert_q \leq \tau(f-g)^{(2+\gamma)/q}$ whenever $(k-m-1)q \geq 2$. Suppose for now that $k\geq 3$. If $k > m+1$, take $s = (\gamma + Q)/\gamma$ and $mp = (k-m-1)q=(k-1)/(1-1/s)$. Decrease the resulting exponent of $\tau(f-g)$ from $Q(2+\gamma)/(Q+\gamma)$ to $2$ so \eqref{eq:holderbound} is bounded by $M \alpha^d \tau(f-g)^2$. If $k\geq 2$ and $k=m+1$, the factor $\Vert Z({i_{m+2}})\cdots Z({i_{k}}) \Vert_q$ is not present in \eqref{eq:holderbound}, but we can still choose $s = (\gamma + Q)/\gamma$ and $mp = (k-1)/(1-1/s)$ to obtain the same bound. Identical arguments also apply to each of the other summands in \eqref{eq:gmcbound}. Hence, we can find some $M'>0$ so that 
\begin{align*}
|\ev Z({i_1})Z({i_2})\cdots Z({i_{k}})| \leq | \ev Z({i_1})Z({i_2})\cdots Z({i_{m}}) \ev Z({i_{m+1}})\cdots Z({i_{k}})| + M' \alpha^d \tau(f-g)^2.
\end{align*}
Here $M'$ in fact depends on $k$, but this does not disturb any of the subsequent steps.

Now replace (A.2) in \citet{andrewspollard1994} by the inequality in the preceding display. In particular, replace their $8\alpha(d)^{1/s}$ with $M' \alpha^d$ and their $\tau^2$ with $\tau(f-g)^2$. The rest of their arguments now go through without changes.
The desired result for $b_k$ follows mutatis mutandis: Simply define $Z(i) = b_k(\xi_i)$, repeat the above steps, and invoke Assumption \ref{as:coupling}\eqref{as:coupling2} in place of Assumption \ref{as:coupling}\eqref{as:coupling1}.
\end{proof}

\addlines

\begin{proof}[Proof of Proposition \ref{prop:gmc}]
Suppose the conditions for \eqref{prop:gmc_indep} hold. Add and subtract, then use $|1\{a < b\} - 1\{c < b\}| \leq 1\{| a-b | \leq |a-c|  \}$ for all $a,b,c\in\mathbb{R}$ and the Cauchy-Schwartz inequality to see that $|1\{ U_n < V_n^{\top}\lambda \} - 1\{ U_n' < V_n'^{\top}\lambda \}|$ is at most 
\begin{align}\label{eq:indbound}
1\{ |U_n - V_n^{\top}\lambda| \leq |U_n-U_n'| \} + 1\{ |U_n' - V_n'^{\top}\lambda| \leq |\lambda || V_n-V_n'| \},
\end{align}
 where $|\cdot|$ is Euclidean norm. 
 
Consider the second term in the preceding display. Denote the distribution function of $U_i$ conditional on $W_i$ by $F_{U\mid W}$. By assumption, we can always find a large enough $n^*$ such that for all $n\geq n^*$ the set $\{ x \pm |\lambda|\beta^{{nq}/{(1+q)}}  : x\in\mathcal{X}, \lambda\in\Lambda \}$ is contained in the interval on which $F_{U\mid W}$ is Lipschitz. Apply the Markov inequality, the GMC property, conditional independence, and continuity to write 
\begin{align*}
&\Vert 1\{ |U_n' -  V_n'^{\top}\lambda| \leq |\lambda | | V_n-V_n'| \} \Vert_p^p\\ 
&\qquad\leq \prob(|U_n - V_n^{\top}\lambda| \leq|\lambda | \beta^{\frac{nq}{1+q}}) + \ev| V_n-V_n'|^q \beta^{-{nq^2}/{(1+q)}}\\
&\qquad= \ev\bigl(\prob(|U_n - V_n^{\top}\lambda| \leq |\lambda |\beta^{\frac{nq}{1+q}}\mid V_n,W_n) \bigr) + O(\beta^{\frac{nq}{1+q}})\\
&\qquad= \ev\bigl( F_{U\mid W}( V_0^{\top}\lambda +|\lambda | \beta^{\frac{nq}{1+q}}\mid W_0) - F_{U\mid W}( V_0^{\top}\lambda - |\lambda |\beta^{\frac{nq}{1+q}}\mid W_0)\bigr) + O(\beta^{\frac{nq}{1+q}})\\
&\qquad= O\bigl(\beta^{\frac{nq}{1+q}}(|\lambda|+1)\bigr).
\end{align*}
The first term in \eqref{eq:indbound} satisfies $\Vert 1\{ |U_n -  V_n^{\top}\lambda| \leq |U_n-U_n'| \} \Vert_p^p = O(\beta^{{nq}/{(1+q)}})$ for the same reasons. Take $\alpha = \beta^{q/(p+pq)}$ and combine these bounds via the Lo\`eve $c_r$ inequality to establish the desired result. 

Assertion \eqref{prop:gmc_func} also follows because the above arguments remain valid with $V_i = g(W_i)$. For \eqref{prop:gmc_single}, we have $V_i\equiv V_i'$ and hence $|1\{ U_n < V_n^{\top}\lambda \} - 1\{ U_n' < V_n'^{\top}\lambda \}|\leq 1\{ |U_n - V_n^{\top}\lambda| \leq |U_n-U_n'| \}$. The second term in \eqref{eq:indbound} is then no longer present, which removes the boundedness restriction on $\Lambda$. Finally, note that all of the above remains valid when strong inequalities are replaced by weak inequalities.
\end{proof}

\end{document}